\documentclass[twoside,11pt]{article}

\usepackage{graphicx, subfigure}
\usepackage[top=1in,bottom=1in,left=1in,right=1in]{geometry}
\usepackage[sort&compress]{natbib} \setlength{\bibsep}{0.0pt}
\usepackage{amsfonts, amsmath, amssymb, amsthm, constants, bbm}
\usepackage{MnSymbol}


\usepackage{color}
\definecolor{darkred}{RGB}{100,0,0}
\definecolor{darkgreen}{RGB}{0,100,0}
\definecolor{darkblue}{RGB}{0,0,150}

\usepackage{hyperref}
\hypersetup{colorlinks=true, linkcolor=darkred, citecolor=darkgreen, urlcolor=darkblue}
\usepackage{url}

\newtheorem{thm}{Theorem}
\newtheorem{prp}{Proposition}

\newtheorem{cor}{Corollary}

\newtheorem{rem}{Remark}

\def\beq{\begin{equation}} 
\def\eeq{\end{equation}}
\def\beqn{\begin{eqnarray*}}
\def\eeqn{\end{eqnarray*}}
\def\Bitem{\begin{itemize}\setlength{\itemsep}{.2in}}
\def\bitem{\begin{itemize}\setlength{\itemsep}{.05in}}
\def\eitem{\end{itemize}}
\def\Benum{\begin{enumerate}\setlength{\itemsep}{.2in}}
\def\benum{\begin{enumerate}\setlength{\itemsep}{.05in}}
\def\eenum{\end{enumerate}}
\def\bmult{\begin{multline*}}
\def\emult{\end{multline*}}
\def\bcenter{\begin{center}}
\def\ecenter{\end{center}}
\def\bframe{\begin{frame}}
\def\eframe{\end{frame}}

\newcommand{\thmref}[1]{Theorem~\ref{thm:#1}}
\newcommand{\prpref}[1]{Proposition~\ref{prp:#1}}
\newcommand{\corref}[1]{Corollary~\ref{cor:#1}}

\newcommand{\secref}[1]{Section~\ref{sec:#1}}


\DeclareMathOperator*{\argmin}{arg\, min}




\def\cG{\mathcal{G}}


\def\bZ{\mathbf{Z}}


\def\bz{\mathbf{z}}



\def\bbI{\mathbb{I}}

\def\bbR{\mathbb{R}}

\newcommand{\E}{\operatorname{\mathbb{E}}}
\renewcommand{\P}{\operatorname{\mathbb{P}}}
\newcommand{\Var}{\operatorname{Var}}



\def\eps{\varepsilon}

\def\1{\mathbbm{1}}
\newcommand{\IND}[1]{\bbI\{ #1 \}}


\definecolor{purple}{rgb}{0.4,.1,.9}


\pagestyle{myheadings}

\begin{document}
\thispagestyle{empty}

\title{On the Consistency of the Crossmatch Test}

\author{
Ery Arias-Castro\footnote{Department of Mathematics, University of California, San Diego, CA, USA}
\and
Bruno Pelletier\footnote{D\'epartement de Math\'ematiques, IRMAR -- UMR CNRS 6625, Universit\'e Rennes II, France}
}
\date{}
\maketitle

\begin{abstract}
\cite{rosenbaum2005exact} proposed the crossmatch test for two-sample goodness-of-fit testing in arbitrary dimensions.  
We prove that the test is consistent against all fixed alternatives.
In the process, we develop a general consistency result based on \citep{henze1999multivariate} that applies more generally. 
\end{abstract}

\section{Introduction} \label{sec:intro}
Two-sample goodness-of-fit testing is an important line of research in statistics.  
We consider the continuous setting where we observe two independent samples, $X_1, \dots, X_m$ and $Y_1, \dots, Y_n$ in $\bbR^d$.  
Classical approaches include the Kolmogorov-Smirnov test \citep{kolmogorov1933sulla,smirnov1939estimation}, the number-of-runs test \citep{MR0002083}, and the longest-run test \citep{MR0004453}.  These procedures were originally designed for real-valued observations ($d=1$).  Over the years, a number of approaches that apply to vector-valued observations ($d > 1$) have been suggested.  
Among these are a class of tests based on graph constructions.  This goes back at least to the work of \cite{friedman1973nonparametric}.  
Their method is based on counting the number of $X$'s among the $K$-nearest neighbors each observation in the combined sample.  
See also \citep{rogers1976some} and more recently \citep{hall2002permutation}.
Although it does not cover all the possibilities, many of the subsequent proposals can be framed as follows.  Let $t = m+n$ denote the total sample size, and let $\cG$ be a directed graph with node set the combined sample $\{Z_1, \dots, Z_t\}$, with $Z_k = X_k$ if $k \le m$ and $Z_k = Y_{k - m}$ if $k > m$.  We write $Z_i \to Z_j$ when there is an edge from $Z_i$ to $Z_j$ in $\cG$.  Consider rejecting for {\em small} values of
\beq\label{crossmatch}
\chi_\cG(\bZ) = \# \{i \le m, j > m : Z_i \to Z_j\} + \# \{i \le m, j > m : Z_j \to Z_i\},
\eeq
which is the number of neighbors in the graph from different samples.
If the graph $\cG$ is the $K$-nearest neighbor graph --- where $Z_i \to Z_j$ if $Z_j$ is among the $K$-nearest neighbors of $Z_i$ in Euclidean distance --- and we assume that all the $Z$'s are distinct, then the resulting test is that of \cite{schilling1986multivariate}, a special case of the general approach of \cite{friedman1973nonparametric}. 
If the graph $\cG$ is a minimum spanning tree (starting with the complete graph weighted by the Euclidean distances), then the resulting test is the multivariate runs test of \cite{friedman1979multivariate}.  

If the graph $\cG$ is a minimum distance matching, then the resulting test is that of \cite{rosenbaum2005exact}, which is the method that we analyze in the present paper --- and possibly the latest in this line.
\cite{rosenbaum2005exact} calls his method the crossmatch test.  
The graph $\cG$ is a minimal distance matching of the combined sample, where the distances are understood here as being the Euclidean distances.  
In detail, the following optimization problem is solved
\beq\label{matching}
\min_{\sigma} \sum_{k=1}^t \|Z_k - Z_{\sigma(k)}\|,
\eeq
where the minimization is over permutations $\sigma$ of $[t] := \{1,\dots, t\}$ of order 2 with at most one fixed point, meaning, $\sigma(\sigma(k)) = k$ for all $k$ and $\sigma(k) \ne k$ for all $k$ except at most one.  (Note that there is a fixed point if and only if $t$ is odd.)  Choosing a solution $\hat\sigma$ at random if there are several, the graph $\cG$ is then defined as the undirected graph with vertex set $[t]$ and edge set $\{(k,\hat\sigma(k)) : \sigma(k) \ne k\}$.  

Among methods based on graph constructions, the crossmatch remains poorly understood. 
\cite{rosenbaum2005exact} derives the null distribution --- which is also studied in \cite{heller2010sensitivity}.  While most other tests need to be calibrated by permutation, the crossmatch test has the nice feature of having a null distribution (which coincides with its permutation distribution) available in closed form. 
However, the power properties of the test, and in particular its consistency, have not been established.

In the present paper, we prove that the crossmatch is consistent against all fixed alternatives.
The proof is based on the arguments developed by \cite{henze1999multivariate} to establish the consistency of the multivariate runs test of \cite{friedman1979multivariate}.
Based on their work, we develop a general result that applies to graph-based tests where the graph has short edges and bounded degree, which in particular applies to other matchings and to the nearest-neighbor graph method of \cite{schilling1986multivariate}.

\section{Setting}

We observe two independent samples, $X_1, \dots, X_m$ IID with density $f$ and $Y_1, \dots, Y_n$ IID with density $g$, both with respect to the Lebesgue measure on $\bbR^d$.  
The goal is to test
\beq
H_0 : f = g \text{ versus } H_1 : f \ne g.
\eeq
(Of course, this is understood modulo a set of measure zero.) 
We assume that the sample sizes are comparable in the sense that
\beq\label{usual}
\frac{m}{m+n} \to p \in (0,1).
\eeq

Formally, a graph construction is a function $\cG$ defined on the finite (unordered) subsets of $\bbR^d$, where for $z_1, \dots, z_t \in \bbR^d$, $\cG(z_1,\dots, z_t)$ is a simple\footnote{A simple graph has no multi-edges and no self-loops.  Such a graph (if unweighted) can be represented by its adjacency matrix, having only 0's and 1's, with all 0's on the diagonal.}
directed graph with $k$ nodes.  Recalling the definition of the $Z_k$'s in the Introduction, the method starts by constructing the graph based on the combined sample, which means computing $\cG(\bZ)$.  Once this is done, the statistic $\chi_\cG(\bZ)$, defined in \eqref{crossmatch}, is computed.  The test rejects for {\em small} values of $\chi_\cG(\bZ)$ and calibration is typically done by permutation.  

\begin{rem}
Although the literature on graph-based goodness-of-fit testing in arbitrary dimensions is silent on the topic, the setting exhibits a typical curse of dimensionality, as we discuss in a forthcoming paper  
\citep{gof-graph}.  We therefore assume that $d$ is constant.
\end{rem}

\section{Almost sure convergence for general graphs}
For $\bz = \{z_1, \dots, z_t\}$ in $\bbR^d$, let $\Delta^{\rm out}(z_k; \cG(\bz))$ (resp.~$\Delta^{\rm in}(z_k; \cG(\bz))$) denote the out-degree (resp.~in-degree) of $z_k$ in graph $\cG(\bz)$.  We assume that $\cG$ has degree bounded by $\delta_0$, meaning that, for any set of points $\bz = \{z_1, \dots, z_t\}$,
\beq\label{max-deg}
\Delta^{\rm out}(z_t; \cG(\bz)) \vee \Delta^{\rm in}(z_t; \cG(\bz)) \le \delta_0.
\eeq
We also assume that the out-degree is essentially constant and that long edges are essentially absent.  Specifically, following  \citep{henze1999multivariate}, let $\phi, \phi_1, \phi_2, \dots$ be any density functions with same support such that $\phi_t/\phi \to 1$ uniformly on $\{\phi > 0\}$.  
Let $\bZ_t = \{Z_{1,t}, \dots, Z_{t,t}\}$ be IID with density $\phi_t$.  Then, under these circumstances, we assume that the out-degree satisfies
\beq\label{limit-deg}
\Delta^{\rm out}(Z_{1,t}; \cG(\bZ_t)) \to^P \delta, \quad t \to \infty,
\eeq
where $\delta > 0$ only depends on $\cG$.
We also assume that edges of length much larger than $t^{-1/d}$ are unlikely, in the sense that
\beq\label{local}
\lim_{a \to \infty} \lim_{t \to \infty} \P\big[\exists k \in [t] : Z_{1,t} \to Z_{k,t} \text{ in } \cG(\bZ_t) \text{ and } \|Z_{1,t} - Z_{k,t}\| > a t^{-1/d}\big] = 0.
\eeq 
We note that $t^{-1/d}$ is the order of magnitude of the distance between a sample point and its closest neighbor in the sample.

\begin{rem}
The conditions above can be shown to cover not only the minimum spanning tree (as shown by \cite{henze1999multivariate}), but also nearest-neighbor graphs \citep{schilling1986multivariate} and general matchings (our main interest here) as shown in \secref{matchings}.
\end{rem}

\begin{thm} \label{thm:main}
Assume the graph construction $\cG$ satisfies these assumptions.
Then, in the limiting regime \eqref{usual}, 
\beq
\frac{\chi_\cG(\bZ)}{m+n} \to 2\delta \int \frac{p(1-p) f(z) g(z)}{pf(z) + (1-p)g(z)} {\rm d}z, \quad \text{almost surely.}
\eeq
\end{thm}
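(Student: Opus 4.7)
The plan is to adapt the strategy of \cite{henze1999multivariate} for the multivariate runs test to the abstract framework of the theorem. I first decompose
$$\chi_\cG(\bZ) = \sum_{k=1}^t A_k, \quad A_k := \#\{j : Z_k \to Z_j \text{ in } \cG(\bZ), \text{ and } Z_j, Z_k \text{ lie in opposite samples}\},$$
so that proving the theorem reduces to establishing convergence of the empirical average $\chi_\cG(\bZ)/t$. I then analyze $\mathbb{E}[A_k]$ using a mixture coupling: view the combined sample as IID draws from $\phi_t := (m/t) f + (1 - m/t) g$, each point independently assigned the label $X$ with conditional probability $\pi_{X,t}(Z_k) := (m/t) f(Z_k)/\phi_t(Z_k)$ and $Y$ otherwise. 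A short calculation shows that this model, once conditioned on the event that exactly $m$ points are labeled $X$, recovers (as an unordered collection of points with labels) the actual two-sample distribution; the advantage is that in the unconditioned mixture, the labels are \emph{independent} given the locations, which streamlines the local analysis.

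For the mean, condition on $Z_k = z$. Assumption \eqref{local} forces every out-neighbor $Z_j$ of $Z_k$ to lie within a vanishing ball around $z$, so that at Lebesgue-a.e.\ point $z \in \{\phi > 0\}$ a local continuity argument gives $\pi_{X,t}(Z_j) \to \pi_X(z) := pf(z)/\phi(z)$, where $\phi := pf + (1-p)g$. Coupled with the out-degree convergence \eqref{limit-deg} and the conditional independence of labels in the mixture coupling, this yields
$$\mathbb{E}[A_k \mid Z_k = z] \to \delta\bigl[\pi_X(z)\pi_Y(z) + \pi_Y(z)\pi_X(z)\bigr] = 2\delta\,\pi_X(z)\pi_Y(z),$$
with $\pi_Y(z) := (1-p)g(z)/\phi(z)$. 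Integrating against $z \sim \phi$ recovers the target limit $2\delta \int p(1-p) fg / \phi\,{\rm d}z$ for $\mathbb{E}[\chi_\cG(\bZ)/t]$.

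To upgrade from mean convergence to almost sure convergence, I would use the bounded-degree hypothesis \eqref{max-deg} together with \eqref{local} to bound the Efron-Stein variance: resampling a single $(Z_k, S_k)$ should, by locality, alter $\chi_\cG$ only in a bounded neighborhood of $Z_k$, giving a per-coordinate squared jump of $O(\delta_0^2)$ in expectation and hence $\mathrm{Var}(\chi_\cG(\bZ)/t) = O(1/t)$. Chebyshev along the subsequence $t_k = k^2$ followed by an interpolation argument (using that $\chi_\cG$ changes by $O(\delta_0)$ when a single observation is added) delivers the almost sure convergence, and the transfer back to the actual two-sample model is handled via the mixture-coupling identification above.

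The main obstacle I anticipate is the influence-bound step underlying the Efron-Stein estimate. Although edges are short by \eqref{local}, it is not a priori clear that a local perturbation of one point leaves the global graph $\cG$ untouched outside a bounded neighborhood; for matchings in particular, a local re-pairing could in principle propagate through long chains. Making the ``bounded local influence'' idea rigorous will require a stabilization argument tailored to the specific graph construction, and I expect this to be where most of the technical work lives. A secondary delicate point is the local continuity of $\pi_{X,t}$ at a.e.\ point of $\{\phi > 0\}$, typically handled by truncating on $\{\phi \ge \varepsilon\}$ and invoking dominated convergence as in \cite{henze1999multivariate}.
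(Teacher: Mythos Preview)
Your proposal follows the same route as the paper, which says the proof is ``exactly the same as that of Theorem~2 in \cite{henze1999multivariate}'' with their MST-specific Proposition~1 replaced by the paper's Proposition~\ref{prp:main}. Your mixture-coupling device, the local-continuity computation of the conditional mean, and the subsequence-plus-interpolation scheme for the almost-sure upgrade are precisely the ingredients of the Henze--Penrose argument; the paper packages the mean computation into Proposition~\ref{prp:main}, applied with $\phi_t=(mf+ng)/t$ and with $h$ equal to the conditional cross-label probability, so your treatment of the expectation is essentially identical in content.

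Your worry about the Efron--Stein step is the right place to worry, and the paper does not resolve it either. In \cite{henze1999multivariate} the concentration step for the MST leans on the fact that adding or removing a single point alters at most a dimension-dependent constant number of edges; this bounded-edit-distance property is \emph{not} implied by \eqref{max-deg}, \eqref{limit-deg}, \eqref{local} alone and can fail for optimal matchings, where a single re-pairing may cascade. One partial simplification is available: in the mixture model, conditioning on the locations $\bZ_t$ makes the labels independent, and \eqref{max-deg} alone gives $\Var(\chi_\cG\mid\bZ_t)=O(t)$ via bounded differences in the \emph{labels only}; this reduces matters to almost-sure convergence of the location-only functional $\tfrac1t\sum_{i\to j}h_t(Z_i,Z_j)$. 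Controlling the fluctuations of that residual functional, however, still seems to require some stabilization input beyond the stated hypotheses. So your assessment that ``most of the technical work lives'' here is accurate---and arguably flags a point the paper itself glosses over by deferring wholesale to \cite{henze1999multivariate}.
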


The proof of \thmref{main} is exactly the same as that of Theorem~2 in \citep{henze1999multivariate}, treating out-edges and in-edges separately, and with Proposition~1 there replaced with the following.   
As in the proof of Theorem~2 in \citep{henze1999multivariate}, Proposition~\ref{prp:main} is used in the proof of Theorem~\ref{thm:main} with the choice of $\phi=pf+(1-p)g$ and $\phi_t = (mf+ng)/t$, with $m$ and $n$ implicitly parameterized by $t$.  (Recall that $t = m+n$ is the total sample size.)  

\begin{prp} \label{prp:main}
Let $\phi, \phi_1, \phi_2, \dots$ be any density functions with same support such that $\phi_t/\phi \to 1$ uniformly on $\{\phi > 0\}$, let $\bZ_t = \{Z_{1,t}, \dots, Z_{t,t}\}$ be IID with density $\phi_t$, and assume the conditions of \thmref{main} hold.
Let $h : \bbR^d \times \bbR^d \mapsto [0,1]$ be measurable and such that almost any $z \in \bbR^d$ is a Lebesgue continuity point of $h(z,\cdot) \phi(\cdot)$.  
Then
\beq
\lim_{t \to \infty} \frac1t \E \mathop{\sum\sum}_{1 \le i < j \le t} h(Z_{i,t}, Z_{j,t}) \IND{Z_{i,t} \to Z_{j,t} \text{ in } \cG(\bZ_t)} = \tfrac12 {\delta} \int h(z,z) \phi(z) {\rm d}z.
\eeq
\end{prp}

Proposition~1 in \citep{henze1999multivariate} shows this when $\cG$ is the minimum spanning tree based on properties obtained in \citep{aldous1992asymptotics}.
The proof of \prpref{main} still borrows much from that of Proposition~1 in \citep{henze1999multivariate}, although it is simpler here because we work under more specific assumptions which \citep{henze1999multivariate} verify for the minimum spanning tree along the way.

\begin{proof}
For any $a > 0$, we have
\beq\begin{split}
\frac1t \E \mathop{\sum\sum}_{1 \le i < j \le t} h(Z_{i,t}, Z_{j,t}) \IND{Z_{i,t} \to Z_{j,t}  \text{ in } \cG(\bZ_t)} 
&= \frac12 \E \sum_{k=2}^t h(Z_{1,t}, Z_{k,t}) \IND{Z_{1,t} \to Z_{k,t} \text{ in } \cG(\bZ_t)} \\
&= \frac12\big[ \mathfrak{A} + \mathfrak{B} + \mathfrak{C} \big],
\end{split}\eeq
where
\beq\begin{split}
\mathfrak{A} &:= \E \sum_{k=2}^t  h(Z_{1,t}, Z_{1,t}) \IND{Z_{1,t} \to Z_{k,t} \text{ in } \cG(\bZ_t)}, \\
\mathfrak{B} &:= \E \sum_{k=2}^t (h(Z_{1,t}, Z_{k,t}) - h(Z_{1,t}, Z_{1,t}))\IND{Z_{1,t} \to Z_{k,t} \text{ in } \cG(\bZ_t) \text{ and } \|Z_{1,t} - Z_{k,t}\| > a t^{-1/d}}, \\
\mathfrak{C} &:= \E \sum_{k=2}^t (h(Z_{1,t}, Z_{k,t}) - h(Z_{1,t}, Z_{1,t})) \IND{Z_{1,t} \to Z_{k,t} \text{ in } \cG(\bZ_t) \text{ and } \|Z_{1,t} - Z_{k,t}\| \le a t^{-1/d}}.
\end{split}\eeq

For the first term,
\beq\begin{split}
\mathfrak{A}
&= \E h(Z_{1,t}, Z_{1,t}) \Delta^{\rm out}(Z_{1,t}; \cG(\bZ_t)) \\
&\sim \delta \E h(Z_{1,t}, Z_{1,t}) 
= \delta \int h(z,z) \frac{\phi_t(z)}{\phi(z)} \phi(z) {\rm d}z 
\to \delta \int h(z,z) \phi(z) {\rm d}z, \quad t \to \infty.
\end{split}\eeq
In the third line we first used \eqref{limit-deg} and dominated convergence (enabled by the fact that $0 \le h \le 1$); and then dominated convergence again (enabled by the fact that $0 \le h \le 1$ and $0 \le \phi_t/\phi \le 2$ eventually).

For the second term 
\beq\begin{split}
|\mathfrak{B}|
&\le \E \sum_{k=2}^t \IND{Z_{1,t} \to Z_{k,t} \text{ in } \cG(\bZ_t) \text{ and } \|Z_{1,t} - Z_{k,t}\| > a t^{-1/d}} \\
&= \P\big[\exists k \in [t] : Z_{1,t} \to Z_{k,t} \text{ in } \cG(\bZ_t) \text{ and } \|Z_{1,t} - Z_{k,t}\| > a t^{-1/d}\big],
\end{split}\eeq
using the fact that $0 \le h \le 1$.  Hence, $\lim_{a \to \infty} \lim_{t \to \infty} \mathfrak{B} = 0$ by \eqref{local}.

For the third term, we have $|\mathfrak{C}| \le \int \psi_t(z) \phi_t(z) {\rm d}z$, where
\beq\begin{split}
\psi_t(z) 
&:= \E \sum_{k=2}^t |h(z, Z_{k,t}) - h(z,z)| \IND{z \to Z_{k,t} \text{ in } \cG(z, Z_{2,t}, \dots, Z_{t,t}) \text{ and } \|z - Z_{k,t}\| \le a t^{-1/d}}.
\end{split}\eeq
Note that, by \eqref{max-deg}, 
\beq
\psi_t(z) \le \E \Delta^{\rm out}(z; \cG(z, Z_{2,t}, \dots, Z_{t,t})) \le \delta_0.
\eeq
We now show that $\psi_t(z) \to 0$ as $t \to \infty$ for almost all $z$'s and for any fixed $a > 0$, which will imply that $\lim_{t \to \infty} \mathfrak{C} = 0$ by dominated convergence and our assumption on $\phi_t$.  
Indeed, we have
\beq\begin{split}
\psi_t(z) 
&\le (t-1) \E |h(z, Z_{2,t}) - h(z,z)| \IND{\|z - Z_{2,t}\| \le a t^{-1/d}} \\
&= (t-1) \int_{B(z, a t^{-1/d})} \big|h(z,u)\phi_t(u) - h(z,z) \phi_t(z) + h(z,z)\phi_t(z) - h(z,z)\phi_t(u) \big| {\rm d}u \\
&\le t \int_{B(z, a t^{-1/d})} \big|h(z,u)\phi(u) - h(z,z) \phi(z)\big| {\rm d}u + t \int_{B(z, a t^{-1/d})} |\phi(z) - \phi(u)| {\rm d}u \\
&\quad + 2 t \int_{B(z, a t^{-1/d})} |\phi_t(u) - \phi(u)| {\rm d}u +  2\omega_d a^d \big|\phi_t(z) - \phi(z)\big|,
\end{split}\eeq
where $\omega_d$ denotes the volume of the unit ball in $\bbR^d$, and using the triangle inequality and the fact that $h$ has values in $[0,1]$.
Noting that the Lebesgue measure (the usual volume) of $B(z, a t^{-1/d})$ is equal to $\omega_d a^dt^{-1}$, we see that the first integral in the last line converges to zero as $t \to \infty$ when $z$ is a Lebesgue continuity point of $h(z, \cdot) \phi(\cdot)$, and the same is true of the second integral if $z$ is a Lebesgue continuity point of $\phi(\cdot)$.  
We note that almost all points $z$ satisfy these properties.  This follows from our assumptions on $h$ and the fact that almost all points are Lebesgue continuity points of a given integral function (including $\phi$).  
Moreover, because of the uniform convergence of $\phi_t$ towards $\phi$, the third integral converges to zero, and the last term converges to zero for the same reason.

Thus, by taking limits as $t \to \infty$ first and then as $a \to \infty$, we conclude.
\end{proof}

\section{Almost sure convergence for matchings}
\label{sec:matchings}

We now prove that certain kinds of matchings --- including the matching \eqref{matching} --- satisfy the conditions of \thmref{main}.
Let $\bz=\{z_1,\dots,z_t\}$ be any set of points in $\bbR^d$.
Since a matching results in all vertices having exactly one neighbor, except for exactly one of them if $t$ is odd, the resulting graph $\cG(\bz)$ has degree bounded by 1, so that \eqref{max-deg} holds with $\delta_0 = 1$. 
Assume now $\bZ = \{Z_1, \dots, Z_t\}$ are IID from a diffuse distribution on $\bbR^d$. 
Then $\Delta^{\rm out}(Z_1; \cG(\bZ))$ follows a Bernoulli distribution with parameter $1 - \tfrac1t \IND{t \text{ odd}}$, so that \eqref{limit-deg} is satisfied with $\delta = 1$.
It remains to establish \eqref{local}, meaning that long edges are rare, which is intuitively natural since matchings aim at minimizing pairing distances.

\subsection{Optimal matchings}
\label{sec:optimal}
We start with a matching which applies to $\bZ = \{Z_1, \dots, Z_t\} \subset \bbR^d$ and is of the form
\beq\label{general-matching}
\min_{\sigma} \Lambda(\bZ; \sigma), \quad \Lambda(\bZ; \sigma) := \sum_{k=1}^t \lambda(\|Z_k - Z_{\sigma(k)}\|),
\eeq
where $\lambda : [0, \infty) \to \bbR$ is non-decreasing and the minimization is as in \eqref{matching}.  Examples include $\lambda(b) = b^\alpha$ where $\alpha > 0$, but one could imagine taking $\lambda$ such that $\lim_{b \to \infty} \lambda(b) < \infty$ for robustness.  Below, we assume that 
\beq\label{lambda}
\lambda(b) \asymp b^\alpha, \quad b \to 0, \quad \text{for some } \alpha \in (0, d).
\eeq
This condition includes the case $\lambda(b) = b^\alpha$ when $\alpha \in (0, d)$, and in particular the original matching \eqref{matching} in dimension $d \ge 2$.

\begin{prp} \label{prp:matching}
Let $\cG$ be the result of any matching of the form \eqref{general-matching} with $\lambda$ satisfying \eqref{lambda}.  
Then $\cG$ fulfills \eqref{local}.
\end{prp}

\begin{proof}
Let $\phi, \phi_1, \phi_2, \dots$ be any density functions with same support such that $\phi_t/\phi \to 1$ uniformly on $\{\phi > 0\}$.  
Let $\bZ_t = \{Z_{1,t}, \dots, Z_{t,t}\}$ be IID with density $\phi_t$.
Take $\eps > 0$.  Take $r > 0$ such that $\int_A \phi > 1 - \eps$, where $A := [-r,r]^d$.  By our assumptions, there is $t_0$ such that $\int_A \phi_t > 1 - 2\eps$ for all $t \ge t_0$.
Let $\hat\sigma \in \argmin_\sigma \Lambda(\bZ_t; \sigma)$ and define a matching $\tilde\sigma$ as follows.  
Let $K_t = \{k \in [t] : Z_{k,t} \notin A\}$. 
For $k \in K_t \cup \hat\sigma(K_t)$, let $\tilde\sigma(k) = \hat\sigma(k)$.
The indices $k \notin K_t \cup \hat\sigma(K_t)$ are matched between themselves as follows.  Let $s_0$ be the largest integer such that $2^{s_0 d} \le t$.
Consider a regular partition of $A$ into $2^{s_0 d}$ hypercubes, each of side length $2r2^{-s_0}$, and therefore diameter $2^{-s_0} 2r\sqrt{d}$.
Match points within each bin arbitrarily, for example, by solving \eqref{general-matching} within that bin.  Note that each pair contributes at most $\lambda(2^{-s_0} 2r\sqrt{d})$ to $\Lambda(\bZ_t; \tilde\sigma)$.  Remove the matched points.  Since this leaves at most one point per bin, there are at most $2^{s_0 d}$ points left.  Form a regular partition of $A$ into $2^{(s_0-1) d}$ hypercubes and match points within each bin.  Each pair now contributes at most $\lambda(2^{-(s_0-1)} 2r\sqrt{d})$ and, after removing the matched points, there are at most $2^{(s_0-1) d}$ points left.  Continuing in this fashion defines $\tilde\sigma$ and we have
\beq\begin{split}
\Lambda(\bZ_t; \tilde\sigma) 
&\le \sum_{k \in K_t \cup \hat\sigma(K_t)} \lambda(\|Z_{k,t} - Z_{{\hat\sigma(k)},t}\|) \\
&\quad + t \lambda(2^{-s_0} 2r\sqrt{d}) + 2^{s_0 d} \lambda(2^{-(s_0-1)} 2r\sqrt{d}) + 2^{(s_0-1) d} \lambda(2^{-(s_0-2)} 2r\sqrt{d}) + \cdots + 2^d \lambda(2r\sqrt{d}). 
\end{split}\eeq
By the fact that $\hat\sigma$ minimizes \eqref{general-matching}, 
\beq
\Lambda(\bZ_t; \tilde\sigma) \ge \Lambda(\bZ_t; \hat\sigma) = \sum_{k \in K_t \cup \hat\sigma(K_t)} \lambda(\|Z_{k,t} - Z_{{\hat\sigma(k)},t}\|) +  \sum_{k \notin K_t \cup \hat\sigma(K_t)} \lambda(\|Z_{k,t} - Z_{{\hat\sigma(k)},t}\|).  
\eeq
By our condition \eqref{lambda} on $\lambda$, there is $C > c > 0$ such that
\begin{equation}
\label{eq:lambdafunc}
c b^\alpha \le \lambda(b) \le C b^\alpha \quad\text{ for all $b \in [0,1]$.}
\end{equation}
Let $j_r = \min\{j : 2^{-j} 2r\sqrt{d} \le 1\}$.  
Below, $B_1, B_2, \dots$ are positive functions of $\eps, d, \alpha, c, C$, and recall that $r$ is a function of $\eps$.   
With  $2^{s_0 d} \le t \le 2^{(s_0+1)d}$, we get
\beq\begin{split}
\sum_{k \notin K_t \cup \hat\sigma(K_t)} \lambda(\|Z_{k,t} - Z_{{\hat\sigma(k)},t}\|) 
&\le \sum_{j=j_r}^{s_0} 2^{(j+1) d} C(2^{-j} 2r\sqrt{d})^\alpha + \sum_{j=0}^{j_r-1} 2^{(j+1) d} \lambda(2^{-j} 2r\sqrt{d}) \label{eq:sum1}\\
&\le B_2 2^{(d-\alpha) s_0} + B_1 
\le B_2 t^{1 - \alpha/d} + B_1
\le 2 B_2 t^{1 - \alpha/d},
\end{split}\eeq
when $t \ge B_3 := (B_1/B_2)^{1/(1-\alpha/d)}$.
For $a > 0$, let 
\beq
L_{a,t} = \big\{k \notin K_t \cup \hat\sigma(K_t) : \lambda(\|Z_{k,t} - Z_{{\hat\sigma(k)},t}\|) \ge \lambda(a t^{-1/d})\big\}.
\eeq  
Using \eqref{eq:lambdafunc}, we have
\beq
\sum_{k \notin K_t \cup \hat\sigma(K_t)} \lambda(\|Z_{k,t} - Z_{{\hat\sigma(k)},t}\|) 
\ge |L_{a,t}| \lambda(a t^{-1/d})
\ge |L_{a,t}| c (a t^{-1/d})^\alpha, \label{eq:sum2}
\eeq
when $t \ge a^d$, in which case, by combing \eqref{eq:sum1} and \eqref{eq:sum2}, it follows that $|L_{a,t}|/t \le \frac{2}{c} B_2 a^{-\alpha} =: B_4a^{-\alpha}$.

We have
\beq
\P_t(\|Z_{1,t} - Z_{{\hat\sigma(1)},t}\| \ge a t^{-1/d}) \le \P_t(1 \in K_t \cup \hat\sigma(K_t)) + \P_t(1 \in L_{a,t}),
\eeq
where we have used the fact that $\lambda$ is non-decreasing by assumption.
On the one hand, 
\beq
\P_t(1 \in L_{a,t}) 
= \frac1t \sum_{i =1}^t \P_t(i \in L_{a,t}) 
= \frac1t \sum_{i =1}^t \E_t[\IND{i \in L_{a,t}}] 
= \frac1t \E_t[|L_{a,t}| ]
\le B_4 a^{-\alpha}.
\eeq
On the other hand, by construction of $A$, and then $K_t$,
\beq
\P_t(1 \in K_t \cup \hat\sigma(K_t)) \le \P_t(1 \in K_t) + \P_t(\hat\sigma(1) \in K_t) \le 2(2\eps) = 4\eps.
\eeq
Thus, for $a \ge (B_4/\eps)^{1/\alpha}$ and $t \ge B_3 \vee a^d$,
\beq
\P_t(\|Z_{1,t} - Z_{{\hat\sigma(1)},t}\| \ge a t^{-1/d}) \le 5 \eps.
\eeq
Since $\eps$ is arbitrary, we can conclude that \eqref{local} holds.
\end{proof}

We may thus apply \thmref{main} to obtain the following.

\begin{cor} \label{cor:main}
Let $\cG$ be the result of any optimal matching of the form \eqref{general-matching} with $\lambda$ satisfying \eqref{lambda}.  
Then, in the limiting regime \eqref{usual}, 
\beq\label{main}
\frac{\chi_\cG(\bZ)}{m+n} \to \int \frac{2 p(1-p) f(z) g(z)}{pf(z) + (1-p)g(z)} {\rm d}z, \quad \text{almost surely.}
\eeq
\end{cor}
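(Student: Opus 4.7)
The plan is to deduce the corollary by verifying the three hypotheses of \thmref{main} for matching graphs, identifying the constant $\delta$, and then reading off the conclusion. No new analysis is required beyond collecting facts already in hand.

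First, I would note that any matching assigns at most one partner to each vertex, so the bounded-degree condition \eqref{max-deg} is immediate with $\delta_0 = 1$, however one chooses to orient the edges for the directed-graph formalism. Second, for the limiting-out-degree condition \eqref{limit-deg}, I would observe that $\bZ_t$ drawn IID from a density is almost surely in general position, so every vertex is matched except possibly one fixed point when $t$ is odd; hence $\Delta^{\rm out}(Z_{1,t}; \cG(\bZ_t))$ is Bernoulli with mean $1 - \tfrac{1}{t}\IND{t \text{ odd}}$, so \eqref{limit-deg} holds with $\delta = 1$ (convergence even holds in $L^1$). Third, the short-edge condition \eqref{local} is precisely the content of \prpref{matching}, which is applicable because $\lambda$ satisfies \eqref{lambda} by hypothesis.

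With all three hypotheses verified, \thmref{main} applies directly and produces
\beq
\frac{\chi_\cG(\bZ)}{m+n} \to 2\delta \int \frac{p(1-p)f(z)g(z)}{pf(z)+(1-p)g(z)} {\rm d}z \quad \text{almost surely,}
\eeq
which becomes \eqref{main} upon substituting $\delta = 1$. There is no genuine obstacle remaining at this stage: the technical substance of the argument lives in \prpref{matching}, where the dyadic bin construction bounds the cost of the optimal matching by that of an explicit suboptimal one with controllable edge lengths, and hence forces the typical edge incident to a given vertex to have length $O(t^{-1/d})$. Once that proposition is granted, the corollary is a pure bookkeeping application of \thmref{main}.
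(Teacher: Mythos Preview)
Your proposal is correct and follows exactly the paper's own route: the paper verifies \eqref{max-deg} and \eqref{limit-deg} with $\delta_0=\delta=1$ in the opening paragraph of \secref{matchings}, establishes \eqref{local} via \prpref{matching}, and then states the corollary as an immediate application of \thmref{main} with no further proof.
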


\subsection{Greedy matching} \label{sec:greedy}

By greedy matching we mean the following procedure: match the closest pair of points (in Euclidean distance), remove them from the sample, and repeat.  (Ties are broken arbitrarily.)  
Let $\sigma_\circ$ denote this greedy matching.
This can be seen as a greedy minimization strategy for \eqref{general-matching}.  While a typical implementation of \eqref{general-matching} has complexity $O(t^3)$, this greedy procedure has complexity $O(t^{3/2} \log t)$.  See the discussion in \citep{avis1988probabilistic}.  There, it is proved that, if $Z_1, \dots, Z_t$ are IID from a distribution with compact support on $\bbR^d$ (with $d \ge 2$),
\beq
\sum_{k \in [t]} \|Z_k - Z_{\sigma_\circ(k)}\| = O(t^{1 - 1/d}).
\eeq
(In fact, their result is much sharper.)  Although this result is obtained when the underlying distribution does not change with $t$, it can be seen to easily extend to the setting of \thmref{main} as long as $\phi$ has compact support.

With this in place, we can reason as we did in \secref{optimal} to find that the graph $\cG$ resulting from the greedy matching $\sigma_\circ$ also satisfies \eqref{local}.
We may thus apply \thmref{main} to derive the following.

\begin{cor} \label{cor:main-greedy}
Let $\cG$ be the result of the greedy matching.  
Then, assuming that the underlying distributions $f$ and $g$ have compact support, and in the limiting regime \eqref{usual}, the limit \eqref{main} holds.
\end{cor}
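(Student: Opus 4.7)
The plan is to invoke \thmref{main}, for which I must verify the three conditions \eqref{max-deg}, \eqref{limit-deg}, and \eqref{local} on the greedy matching graph $\cG$. As noted at the start of \secref{matchings}, any matching automatically satisfies \eqref{max-deg} with $\delta_0 = 1$ and \eqref{limit-deg} with $\delta = 1$, independently of how the pairing is chosen. Consequently, the only condition requiring work is the long-edge condition \eqref{local}. My strategy mirrors the proof of \prpref{matching}, but with the hierarchical-partition construction of $\tilde\sigma$ and the optimality comparison $\Lambda(\bZ_t;\hat\sigma) \le \Lambda(\bZ_t;\tilde\sigma)$ replaced by a direct application of the bound of \cite{avis1988probabilistic} quoted in the text.

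First, I place myself in the compact-support regime demanded by that bound. Taking $\phi = pf+(1-p)g$ and $\phi_t = (mf+ng)/t$ as in \thmref{main}, both densities are compactly supported because $f$ and $g$ are, and $\phi_t/\phi \to 1$ uniformly on $\{\phi > 0\}$. I then invoke the extension of the Avis--Davis--Steele bound claimed in the text: for $\bZ_t$ IID with density $\phi_t$,
\[
S_t := \sum_{k \in [t]} \|Z_{k,t} - Z_{\sigma_\circ(k),t}\| = O(t^{1-1/d}),
\]
in expectation (and it suffices to have this in probability, or on a set whose complement has vanishing probability).

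Next, I apply a Markov-type argument. For $a > 0$, define $L_{a,t} := \{k \in [t] : \|Z_{k,t} - Z_{\sigma_\circ(k),t}\| \ge a t^{-1/d}\}$. The trivial bound $S_t \ge |L_{a,t}| \, a t^{-1/d}$ combined with the previous display yields $\E[|L_{a,t}|]/t \le C/a$ for some constant $C$ independent of $a$ and $t$ (for $t$ large). By exchangeability of the $Z_{k,t}$,
\[
\P\big(\|Z_{1,t} - Z_{\sigma_\circ(1),t}\| \ge a t^{-1/d}\big) = \P(1 \in L_{a,t}) = \E[|L_{a,t}|]/t \le C/a.
\]
Since $Z_{1,t}$ has at most one out-edge (the matching has degree at most one), this is exactly the probability appearing in \eqref{local}. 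Sending $t \to \infty$ first and then $a \to \infty$ yields \eqref{local}, and applying \thmref{main} gives \eqref{main} exactly as in \corref{main}.

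The main obstacle is the first step: the text asserts, but does not detail, the extension of the bound of \cite{avis1988probabilistic} from a fixed compactly-supported sampling distribution to the triangular-array regime in which the density $\phi_t$ varies with $t$. The intuition is compelling, since the arguments in that paper rely on volume/packing estimates that depend only on the diameter of the support and on uniform bounds on the density, both of which are uniformly controlled here by the compactness of $\operatorname{supp}(\phi)$ together with the uniform convergence $\phi_t/\phi \to 1$. Making this rigorous---perhaps by sandwiching $\phi_t$ between constant multiples of $\phi$ and transferring the matching back and forth---is the only delicate point; once $S_t = O(t^{1-1/d})$ is secured, the rest of the argument is the one-line Markov step above.
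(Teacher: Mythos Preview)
Your proposal is correct and follows essentially the same route as the paper: it too invokes the Avis--Davis--Steele bound $S_t = O(t^{1-1/d})$ (whose extension to the triangular-array regime both you and the paper leave as an assertion) and then reuses the Markov/exchangeability step from the proof of \prpref{matching} to obtain \eqref{local}. The only cosmetic difference is that, under the compact-support hypothesis, you rightly dispense with the truncation set $K_t$ that appears in \secref{optimal}; one small caveat is that \thmref{main} formally asks for \eqref{local} for \emph{all} admissible sequences $\phi_t\to\phi$, not just the specific $\phi_t=(mf+ng)/t$ you fix, but your argument goes through verbatim for any compactly supported limit $\phi$.
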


\section{Consistency against all (fixed) alternatives}

We start with describing the behavior of the test statistic $\chi_\cG(\bZ)$ under the null hypothesis where $f = g$.  \cite{rosenbaum2005exact} derives the null distribution of $\chi_\cG$ in closed form (which happens to be equal to its permutation distribution) and finds the exact moments to be\footnote{Note that in our definition there is an extra factor of 2.}
\beq\label{moments}
\E[\chi_\cG(\bZ)] = \frac{2 mn}{t-1} \sim 2p(1-p) t, \quad
\Var[\chi_\cG(\bZ)] = \frac{8 m(m-1)n(n-1)}{(t-1)^2 (t-3)} \sim 8 p^2(1-p)^2 t,
\eeq
where the limits are as $t \to \infty$ in the regime \eqref{usual}.
Clearly, this is true for any matching.
To prove consistency, this is all we need together with \corref{main} (or \corref{main-greedy}), although \cite{rosenbaum2005exact} also shows that the null distribution is asymptotically normal.  

\begin{cor} \label{cor:consistent}
Let $\cG$ be the result of any matching of the form \eqref{general-matching} with $\lambda$ satisfying \eqref{lambda}.  
For any sequence $\eta_t \to 0$ such that $\eta_t \gg 1/\sqrt{t}$, the test with rejection region $\{\frac1t \chi_\cG(\bZ) < 2 p(1-p) - \eta_t\}$ is consistent against all (fixed) alternatives.
This remains true if $\cG$ is the result of the greedy matching and the underlying distributions have compact support.
\end{cor}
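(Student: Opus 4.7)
The plan is to combine two ingredients already at hand: the almost-sure limit of $\chi_\cG(\bZ)/t$ furnished by \corref{main} (and \corref{main-greedy} in the greedy case), and a strict Jensen-type inequality pinning this limit strictly below the null value $2p(1-p)$ under any fixed alternative. The moment formulas \eqref{moments} will then handle the size of the test separately.

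\textbf{Step 1: strict inequality.} Under any alternative $f \ne g$, \corref{main} (resp.\ \corref{main-greedy}) gives
\[
\frac{\chi_\cG(\bZ)}{t} \to \Psi(f,g) := 2p(1-p) \int \frac{f(z) g(z)}{pf(z) + (1-p)g(z)}\, {\rm d}z \quad \text{a.s.}
\]
I would apply Jensen's inequality to the strictly convex function $x \mapsto 1/x$ on $(0,\infty)$ with the two-point distribution placing mass $p$ on $f(z)$ and $1-p$ on $g(z)$: for every $z$ with $f(z), g(z) > 0$,
\[
\frac{1}{pf(z) + (1-p)g(z)} \le \frac{p}{f(z)} + \frac{1-p}{g(z)},
\]
which after multiplication by $f(z)g(z)$ yields the pointwise bound $fg/(pf + (1-p)g) \le pg + (1-p)f$; this bound extends (and is strict) on the sets $\{f=0, g>0\}$ and $\{f>0, g=0\}$ under the natural convention $0/0 = 0$. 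Integrating,
\[
\int \frac{fg}{pf + (1-p)g}\, {\rm d}z \le \int \bigl(pg + (1-p)f\bigr)\, {\rm d}z = 1,
\]
with equality if and only if $f = g$ almost everywhere, by strict convexity of $1/x$. Hence $\Psi(f,g) < 2p(1-p)$ whenever $f \ne g$.

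\textbf{Step 2: conclusion.} Since $\eta_t \to 0$, eventually $2p(1-p) - \eta_t > \Psi(f,g)$, and the a.s.\ convergence from Step~1 forces $\chi_\cG(\bZ)/t < 2p(1-p) - \eta_t$ eventually a.s., so the power tends to one against every fixed alternative. For completeness, under $H_0$ the moments \eqref{moments} give $\E[\chi_\cG/t] = 2p(1-p) + O(1/t)$ and $\Var[\chi_\cG/t] = O(1/t)$, so Chebyshev's inequality yields $\P_{H_0}\bigl(\chi_\cG/t < 2p(1-p) - \eta_t\bigr) = O\bigl(1/(t\eta_t^2)\bigr) \to 0$, the hypothesis $\eta_t \gg 1/\sqrt{t}$ being used here (and only here).

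The only non-routine step is Step~1, and within it the equality analysis: strict convexity of $1/x$ rules out equality at any $z$ where both densities are positive but distinct, while the convention $0/0 = 0$ makes the pointwise bound strict on any set where exactly one of $f, g$ vanishes, so $f \ne g$ on a set of positive Lebesgue measure translates into a strict integral inequality in every case. Everything else is a direct application of \corref{main}, \corref{main-greedy}, and the moment formulas \eqref{moments}, so I do not anticipate any further obstacle.
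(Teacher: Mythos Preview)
Your proof is correct and follows the same route as the paper: Chebyshev's inequality with the null moments \eqref{moments} gives asymptotic level zero, and the almost-sure limit from \corref{main} (or \corref{main-greedy}) together with the strict inequality $\Psi(f,g)<2p(1-p)$ gives asymptotic power one. The only difference is that the paper outsources the strict inequality to \cite{henze1999multivariate} and \cite{gyorfi}, whereas you supply a self-contained Jensen argument for it.
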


\begin{proof}
By Chebyshev's inequality and the expression for the null moments \eqref{moments}, we see that the test has asymptotic size 0.  
Under a fixed alternative, $f \ne g$, \corref{main} gives 
\beq
\frac1t \chi_\cG(\bZ) \to 2 p(1-p) \int \frac{f(z) g(z)}{pf(z) + (1-p)g(z)} {\rm d}z, \quad \text{almost surely}.
\eeq
As \cite{henze1999multivariate} argue, based on work of \cite{gyorfi}, this limit is strictly smaller when $f \ne g$ than that when $f = g$ (that latter being equal to $2 p (1-p)$).  Hence, the test has asymptotic power 1.
\end{proof}

\section*{Acknowledgments}
We would like to thank Venkatesh Saligrama for introducing us to the work of \cite{batu2013testing}, and Kasturi Varadarajan and Jeff Phillips for helpful discussions and references regarding the computation of non-bipartite matchings.
This work was partially supported by the US Office of Naval Research (N00014-13-1-0257)

\bibliographystyle{chicago}
\bibliography{crossmatch}

\end{document}